\theoremstyle{plain}
\newtheorem{thm}{Theorem}[section]
\newtheorem{lemma}[thm]{Lemma}
\newtheorem{proposition}[thm]{Proposition}
\theoremstyle{definition}
\newtheorem{definition}[thm]{Definition}
\newtheorem{remark}[thm]{Remark}
\newtheorem{example}[thm]{Example}
\newtheorem{question}[thm]{Question}
\newtheorem{thevarthm}[thm]{\varthmname}
\newenvironment{varthm*}[1]{\trivlist\item[]{\bf #1.}\it}{\endtrivlist}
\renewcommand\geq{\geqslant}
\renewcommand\leq{\leqslant}
\newcommand\be{\begin{eqnarray*}}
\newcommand\ee{\end{eqnarray*}}
\newcommand\newop[2]{\def#1{\mathop{\rm #2}\nolimits}}
\newop\log{log}
\newop\ord{ord}
\newop\Gal{Gal}
\newop\SL{SL}
\newop\Bl{Bl}
\newop\mult{mult}
\newop\mass{mass}
\newop\div{div}
\newop\codim{codim}
\newop\sing{sing}
\newop\vdim{vdim}
\newop\edim{edim}
\newop\Ass{Ass}
\newop\size{size}
\newop\reg{reg}
\newop\satdeg{satdeg}
\newop\supp{supp}
\newop\Neg{Neg}
\newop\Nef{Nef}
\newop\Nefh{Nef_H}
\newop\Eff{Eff}
\newop\Zar{Zar}
\newop\MB{MB}
\newop\MBxC{MB\mathit{(x,C)}}
\newop\NnB{NnB}
\newop\Bigg{Big}
\newop\Effbar{\overline{\Eff}}
\def\keywordname{{\bfseries Keywords}}%
\def\keywords#1{\par\addvspace\medskipamount{\rightskip=0pt plus1cm
\def\and{\ifhmode\unskip\nobreak\fi\ $\cdot$
}\noindent\keywordname\enspace\ignorespaces#1\par}}
\def\subclassname{{\bfseries Mathematics Subject Classification
(2000)}\enspace}
\def\subclass#1{\par\addvspace\medskipamount{\rightskip=0pt plus1cm
\def\and{\ifhmode\unskip\nobreak\fi\ $\cdot$
}\noindent\subclassname\ignorespaces#1\par}}
\begin{document}
\title{Seshadri constants and special configurations of points in the projective plane}
\author{Piotr Pokora}
\date{\today}
\maketitle
\thispagestyle{empty}
\begin{abstract}
In the present note, we focus on certain properties of special curves that might be used in the theory of multi-point Seshadri constants for ample line bundles on the complex projective plane. In particular, we provide three Ein-Lazarsfeld-Xu-type lemmas for plane curves and a lower bound on the multi-point Seshadri constant of $\mathcal{O}_{\mathbb{P}^{2}}(1)$ under the assumption that the chosen points are not very general. In the second part, we focus on certain arrangements of points in the plane which are given by line arrangements. We show that in some cases the multi-point Seshadri constants of $\mathcal{O}_{\mathbb{P}^{2}}(1)$ centered at singular loci of line arrangements are computed by lines from the arrangement having some extremal properties.\\
\keywords{Seshadri constants, point configurations, projective plane}
\subclass{14C20, 14N20, 52C30}
\end{abstract}

\section{Introduction}
In this note, we would like to present some new techniques that can be applied in the context of Seshadri constants which measure the local positivity of line bundles on algebraic varieties. Let us recall that for a projective variety $X$ of dimension $\dim X = n$ and $L$ a nef line bundle, the multi-point Seshadri constant of $L$ at $r\geq 1$ points $x_{1}, ..., x_{r} \in X$ is defined as 
$$\varepsilon(X,L;\, x_{1}, ..., x_{r}) = \inf_{\{x_{1},...,x_{r}\} \cap C \neq \emptyset} \frac{L\cdot C}{\sum_{i=1}^{r} {\rm mult}_{x_{i}}C},$$
where the infimum is taken over all irreducible and reduced curves $C$ on $X$. We know that there exists an upper-bound on the multi-point Seshadri constant, namely
$$\varepsilon(X,L;\, x_{1}, ..., x_{r})  \leq \sqrt[n] {\frac{L^{n}}{r} }.$$
Next, we define 
$$\varepsilon(X,L; \, r) = \max_{x_{1}, ..., x_{r} \in X} \varepsilon(X,L; x_{1}, ...,x_{r}),$$
and an interesting result due to K. Oguiso \cite{Oguiso} tells us that the value $\varepsilon(X,L;\, r)$ is attained at a set of very general points $x_{1}, ..., x_{r}$, i.e., outside a countable union of proper Zariski closed subsets in $X^{r}$. For further details about Seshadri constants (and more), we refer to the following fantastic lecture notes by B. Harbourne \cite{HarbInd}.

In the note, we restrict our attention to the case of the complex projective plane $\mathbb{P}^{2}$ and special point configurations. There are some interesting results providing bounds on multi-point Seshadri constants for $\mathcal{O}_{\mathbb{P}^{2}}(1)$ when points are in (very) general position, for instance in \cite{Bau,Tai,HR09, KSS, Oguiso,Roe1,Roe2,Sz1,Sz2,Sz3}, but not much is known about the precise values of Seshadri constants of $\mathcal{O}_{\mathbb{P}^{2}}(1)$ when the points are not in very general position. So our aim here is to present some new techniques and actual values of Seshadri constants when point configurations are \emph{special}. Since the most special position of points is when $x_{1}, ...,x_{r}$ are in a line $\ell \in \mathbb{P}^{2}$, for any configuration of points $x_{1}, ..., x_{r} \in \mathbb{P}^{2}$ one always has
$$ \frac{1}{r} \leq \varepsilon (\mathbb{P}^{2}, \mathcal{O}_{\mathbb{P}^{2}}(1);\, x_{1}, ..., x_{r}) \leq \frac{1}{\sqrt{r}}.$$

We will present three different Ein-Lazarsfeld-Xu-type lemmas (cf. \cite{EL,Xu}) in the case when points are not in very general position. The first one follows from Orevkov-Sakai-Zaidenberg inequality which involves the logarithmic version of the Bogomolov-Miyaoka-Yau inequality, the second lemma follows from B\'ezout Theorem, and the last inequality can be obtained using Huh's result on the Milnor numbers of singularities of reduced and irreducible hypersurfaces in $\mathbb{P}^{N}$. This allows us to give a non-trivial lower bound on multi-point Seshadri constants for $\mathcal{O}_{\mathbb{P}^{2}}(1)$ in special cases -- it seems to us that our bound is the first one in this setting. In the second part of the note, we compute Seshadri constants of $\mathcal{O}_{\mathbb{P}^{2}}(1)$ at point configurations given by singular loci of reduced curves. Our approach allows us to see that using rather special point configurations in the plane the values of the associated Seshadri constants of $\mathcal{O}_{\mathbb{P}^{2}}(1)$ are rather close to those with respect to configurations of very general points. The key idea behind this article is to use combinatorial methods in the theory of Seshadri constants and we try to understand which combinatorial properties of point configurations can be useful in the context approximations of Seshadri constants in general (please consult concluding remarks).

In the note, we work exclusively over the complex numbers.
\section{Ein-Lazarsfeld-Xu-type lemmas for plane curves}
In this section, we present three different Ein-Lazarsfeld-Xu-type lemmas for irreducible and reduced curves in the complex projective plane. Our general assumption is the following:
\begin{center}
$(\bullet)$ \emph{curves are passing through all given points with multiplicities greater than or equal to $2$}. 
\end{center}
The key advantage of our approach is that provided lemmas do not depend on the restriction that points must be in \emph{very general position}.

We assume in this section that if ${\rm Sing}(C) = \{p_{1}, ..., p_{s}\}$ are the singular points of a curve $C \subset \mathbb{P}^{2}$, then the multiplicities $m_{i}(C) = m_{p_{i}}(C)$ form a weakly-decreasing sequence $m_{1} \geq ... \geq m_{s} \geq 2$. We use also $m_{i}$ instead of $m_{i}(C)$ if this is clear from the context. We also start with $s\geq 3$ in order to avoid trivialities.
\begin{proposition}
Let $C \subset \mathbb{P}^{2}$ be an irreducible and reduced curve of degree $d \geq 4$ having singular points $p_{1}, ...,p_{s}$, then one has
$$d^{2} - \sum_{i=1}^{s}m_{i}^{2} > \frac{3}{2}\bigg( d - \sum_{i=1}^{s} m_{i}\bigg).$$
\end{proposition}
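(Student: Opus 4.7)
My plan is to deduce the inequality from non-negativity of the geometric genus of the normalization of $C$, which is the most elementary avatar of the log BMY / OSZ-type bounds mentioned in the introduction. For the statement at hand this already suffices, so the full strength of Orevkov--Sakai--Zaidenberg will not be needed.

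The first step is to invoke the genus formula for an irreducible plane curve: if $\tilde{C} \to C$ denotes the normalization, then
$$g(\tilde{C}) \;=\; \binom{d-1}{2} - \sum_{i=1}^{s} \delta_{p_i} \;\geq\; 0,$$
where $\delta_{p_i}$ is the delta invariant at $p_i$ and the inequality uses the irreducibility of $C$. Combining this with the classical lower bound $\delta_p \geq \binom{m_p}{2}$ at a singular point of multiplicity $m_p$ (with equality exactly when the singularity is ordinary with distinct tangents) gives
$$\sum_{i=1}^{s}\binom{m_i}{2} \;\leq\; \binom{d-1}{2},$$
which upon clearing denominators rearranges to
$$d^{2} - \sum_{i=1}^{s} m_i^2 \;\geq\; 3d - 2 - \sum_{i=1}^{s} m_i. \qquad (\star)$$

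The second step is the purely numerical verification that $3d - 2 - \sum m_i > \tfrac{3}{2}\bigl(d - \sum m_i\bigr)$, which after rearranging is just $\tfrac{3d}{2} - 2 + \tfrac{1}{2}\sum m_i > 0$. This is immediate from $d \geq 4$ (and is in fact also forced by $s \geq 3$ via $(\star)$). Chaining this with $(\star)$ yields the desired strict inequality.

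Since both steps are mechanical, there is no substantive obstacle. The only point worth flagging is that $(\star)$ is merely a weak inequality, so the strictness in the conclusion must be harvested entirely from the second, elementary step -- which is precisely where the hypothesis $d \geq 4$ enters. A conceptual remark: $(\star)$ is a particular consequence of the log BMY inequality for the pair $(\mathbb{P}^{2}, C)$, so the proof above is consistent with the OSZ heuristic advertised in the introduction even though it invokes only the classical genus bound.
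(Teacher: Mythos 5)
Your argument is correct, but it takes a genuinely different route from the paper's. The paper derives the inequality from the Orevkov--Sakai--Zaidenberg inequality $\sum_{i}\bigl(2+\tfrac{1}{m_i}\bigr)\mu_i\le 2d^2-3d$ combined with the Milnor-number bound $\mu_i\ge (m_i-1)^2$; after expanding, the leftover terms $\tfrac{1}{m_i}$ supply the strictness of the conclusion directly. You instead use only the classical genus formula $g(\widetilde{C})=\binom{d-1}{2}-\sum_i\delta_{p_i}\ge 0$ together with $\delta_p\ge\binom{m_p}{2}$, and you harvest the strictness from the elementary numerical step $3d-2-\sum_i m_i>\tfrac{3}{2}\bigl(d-\sum_i m_i\bigr)$, which indeed reduces to $3d+\sum_i m_i>4$ and holds for $d\ge 4$ (in fact already for $d\ge 2$). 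Both steps check out. Your route is more elementary --- it bypasses the log-BMY-type input entirely --- and your intermediate inequality $(\star)$, equivalently $\sum_i m_i(m_i-1)\le (d-1)(d-2)$, is actually \emph{stronger} than the paper's Proposition 2.5, which only gives $\sum_i m_i(m_i-1)\le d(d-1)$ via B\'ezout with a polar curve; so your argument would also sharpen the second Ein--Lazarsfeld--Xu-type lemma. What the paper's heavier OSZ machinery buys in exchange is the finer weighted control over the actual Milnor numbers, of which Proposition 2.1 extracts only a small part. One immaterial slip: equality in $\delta_p\ge\binom{m_p}{2}$ holds precisely when the singularity is resolved by a single blow-up (so also, e.g., for an ordinary cusp), not only for ordinary multiple points with distinct tangents; since you use only the inequality, this does not affect the proof.
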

\begin{proof}
Let us recall that for such a curve the following Orevkov-Sakai-Zaidenberg inequality (\cite[Theorem 1]{OZ95}, \cite[Theorem (1.2)]{Sakai}) holds:
$$\sum_{i=1}^{s} \bigg(2 + \frac{1}{m_{i}}\bigg) \mu_{i} \leq 2d^{2}-3d,$$
where $\mu_{i}$'s are the corresponding Milnor numbers. Since for isolated singularities one always has $\mu_{i} \geq (m_{i}-1)^{2}$ (see for instance \cite[Theorem 1.8]{Lin}), we obtain
$$\sum_{i=1}^{s} \bigg(2 + \frac{1}{m_{i}}\bigg)(m_{i}-1)^{2} \leq \sum_{i=1}^{s} \bigg(2 + \frac{1}{m_{i}}\bigg)\mu_{i} \leq 2d^{2} - 3d.$$
Since
$$\sum_{i=1}^{s} \bigg(2 + \frac{1}{m_{i}}\bigg)(m_{i}-1)^{2} = \sum_{i=1}^{s} \bigg(2m_{i}^{2}-3m_{i}+\frac{1}{m_{i}}\bigg),$$
this leads to 
$$\sum_{i=1}^{s} \bigg(2m_{i}^{2}-3m_{i}\bigg) < \sum_{i=1}^{s} \bigg(2m_{i}^{2}-3m_{i}+\frac{1}{m_{i}}\bigg),$$
and we finally obtain
$$\sum_{i=1}^{s} \bigg(2m_{i}^{2}-3m_{i}\bigg) < 2d^{2} - 3d,$$
which completes the proof.
\end{proof}
This inequality, analytic in nature, can be viewed as a first step towards an Ein-Lazarsfeld Xu-type lemma. Before we present our lemma, let us recall the following definition.
\begin{definition}
The gonality ${\rm gon}(C)$ of a complex curve $C$ is the smallest degree of a non-constant map, defined over the complex numbers, from $C$ to the complex projective line $\mathbb{P}^{1}$.
\end{definition}
\begin{lemma}[\cite{OF}]
Let $C$ be a complex reduced and irreducible plane curve of degree $d \geq 1$, then ${\rm gon}(C) \leq d$.
\end{lemma}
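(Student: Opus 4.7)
The plan is to exhibit an explicit degree-$d$ morphism from the normalization of $C$ to $\mathbb{P}^1$ by linear projection from a point off the curve. First I would pick any point $p \in \mathbb{P}^2 \setminus C$, which exists because $C$ is a proper closed subset of $\mathbb{P}^2$. The linear projection
$$\pi_p : \mathbb{P}^2 \setminus \{p\} \longrightarrow \mathbb{P}^1,$$
sending a point $q$ to the line $\overline{pq}$ in the pencil of lines through $p$, is a morphism on its domain. Since $p \notin C$, the restriction $\pi_p|_C : C \to \mathbb{P}^1$ is a morphism defined on all of $C$, and it is clearly non-constant (no single line contains the irreducible curve $C$ of degree $d \geq 1$ except in the degenerate case $d = 1$, where the result is trivial anyway).

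Next I would compute the degree. A fiber of $\pi_p|_C$ over a point $[\ell] \in \mathbb{P}^1$ is exactly $\ell \cap C$, where $\ell$ is the corresponding line through $p$. By B\'ezout's theorem, $\ell \cap C$ consists of $d$ points counted with multiplicity, and for a generic choice of $\ell$ these are $d$ distinct points lying in the smooth locus of $C$ (the singular locus of $C$ is finite, so only finitely many lines through $p$ meet it). Hence $\pi_p|_C$ is generically \'etale of degree $d$.

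Finally, composing with the normalization $\nu : \widetilde{C} \to C$ gives a non-constant morphism $\pi_p \circ \nu : \widetilde{C} \to \mathbb{P}^1$ of degree $d$ from the smooth projective model of $C$. By the definition of gonality, this yields $\gon(C) \leq d$.

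The main obstacle is essentially bookkeeping rather than mathematical content: one must remember that the gonality is computed on the smooth projective model $\widetilde{C}$ of $C$, and verify that a generic fiber of the projection consists of $d$ distinct smooth points so that the degree of the induced map on $\widetilde{C}$ is genuinely $d$ and not something smaller. Both are immediate from genericity of the line $\ell$ through $p$ and the finiteness of $\mathrm{Sing}(C)$.
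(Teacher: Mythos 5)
Your argument is correct. Note, however, that the paper offers no proof of this lemma at all: it is stated as a quoted result from Ohkouchi--Sakai \cite{OF}, so there is no internal argument to compare against. Your linear projection from a point $p\notin C$ is the standard classical proof of the bound ${\rm gon}(C)\le d$, and all the points you flag as needing care are handled correctly: the gonality is computed on the normalization $\widetilde C$, a general line through $p$ meets $C$ transversally in $d$ distinct smooth points (only finitely many lines through $p$ are tangent to $C$ or meet ${\rm Sing}(C)$), and the normalization map is bijective over those points, so the induced map $\widetilde C\to\mathbb P^1$ genuinely has degree $d$. One small remark: your parenthetical about non-constancy in the case $d=1$ is unnecessary --- since $p\notin C$, the curve $C$ cannot lie in a single line of the pencil through $p$, so the projection is non-constant for every $d\ge 1$. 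It is also worth knowing that the actual content of \cite{OF}, which the paper uses later in Section 2, is the sharper bound ${\rm gon}(C)\le d-m_1$ obtained by projecting instead from a point of maximal multiplicity $m_1$ \emph{on} $C$; your construction specializes to that refinement by moving the center of projection onto the curve.
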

Proposition 2.1 and Lemma 2.3 give our first Ein-Lazarsfeld-Xu-type inequality.
\begin{lemma}
Let $C$ be an irreducible and reduced plane curve in the projective plane of degree $\deg C \geq 3$ having singular points $p_{1}, ..., p_{s}$, then
$$C^{2} = d^{2} > \sum_{i=1}^{s} m_{i}\bigg(m_{i}-\frac{3}{2}\bigg) + \frac{3}{2} {\rm gon}(C).$$
\end{lemma}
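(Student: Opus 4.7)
The plan is to obtain Lemma 2.4 as an essentially formal consequence of Proposition 2.1 combined with Lemma 2.3. First I would rearrange the target inequality by expanding $m_{i}(m_{i} - 3/2) = m_{i}^{2} - (3/2)m_{i}$, moving the $\sum m_{i}^{2}$ term to the left and grouping the multiplicity sum with the gonality term on the right, so that the desired statement becomes the equivalent form
$$d^{2} - \sum_{i=1}^{s} m_{i}^{2} > \frac{3}{2}\bigg({\rm gon}(C) - \sum_{i=1}^{s} m_{i}\bigg).$$
This reformulation makes the role of each hypothesis transparent: the left-hand side is exactly what Proposition 2.1 controls, and the right-hand side is exactly where Lemma 2.3 intervenes.

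Next I would invoke Proposition 2.1 to get
$$d^{2} - \sum_{i=1}^{s} m_{i}^{2} > \frac{3}{2}\bigg(d - \sum_{i=1}^{s} m_{i}\bigg),$$
and then use the gonality bound ${\rm gon}(C) \leq d$ from Lemma 2.3 to note
$$\frac{3}{2}\bigg(d - \sum_{i=1}^{s} m_{i}\bigg) \geq \frac{3}{2}\bigg({\rm gon}(C) - \sum_{i=1}^{s} m_{i}\bigg).$$
Chaining the two displays yields the reformulated inequality, hence the lemma.

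The argument is essentially a one-line deduction, so I do not foresee a substantive obstacle. The only cosmetic point worth checking is the discrepancy in degree hypotheses: Proposition 2.1 is stated under $d \geq 4$, whereas Lemma 2.4 is claimed for $d \geq 3$. I would dispose of the residual case $d=3$ by hand, noting that an irreducible singular plane cubic has a unique double point (a node or a cusp), so $\sum m_{i}(m_{i} - 3/2) = 2\cdot(1/2) = 1$, ${\rm gon}(C) \leq 3$, and the right-hand side is at most $1 + 9/2 = 11/2 < 9 = d^{2}$. Thus the cubic case is immediate, and the rest follows from the reformulation together with Proposition 2.1 and Lemma 2.3.
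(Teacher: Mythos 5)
Your proposal is correct and is essentially the paper's own argument: the paper derives this lemma precisely by combining Proposition 2.1 with the gonality bound $\mathrm{gon}(C)\le d$ from Lemma 2.3, exactly as you do. Your separate treatment of the $d=3$ case (where Proposition 2.1 does not formally apply) is a careful touch the paper omits, though that case is in any event vacuous under the section's standing assumption $s\ge 3$, since an irreducible cubic has at most one singular point.
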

Now we present a different approach to obtain a stronger version of the lemma above.
\begin{proposition}
Let $C$ be an irreducible and reduced plane curve in the projective plane having singular points $p_{1}, ..., p_{s}$, then
$$d^{2} - d\geq  \sum_{i=1}^{s} m_{i}(m_{i}-1)$$
\end{proposition}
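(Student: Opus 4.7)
The plan is to apply B\'ezout's theorem to $C$ together with a generic polar curve. Choose a defining polynomial $F \in \mathbb{C}[x_0, x_1, x_2]$ of degree $d$ for $C$, pick a generic point $q = [a_0 : a_1 : a_2] \in \P^2$, and let $P_q$ be the plane curve of degree $d-1$ cut out by $a_0 F_{x_0} + a_1 F_{x_1} + a_2 F_{x_2}$.

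The first step is to verify that $C$ and $P_q$ have no common component. Since $C$ is irreducible, a shared component would force $C \subseteq P_q$, contradicting $\deg P_q < \deg C$ as soon as the defining polynomial of $P_q$ is non-zero. For generic $q$ and $d \geq 2$ this is automatic (if all three partials were zero, $F$ would be constant), and B\'ezout's theorem then gives $C \cdot P_q = d(d-1)$.

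The second step is to bound the local intersection multiplicity at each singular point $p_i$. Expanding $F$ in local coordinates near $p_i$ as a sum of homogeneous pieces starting in degree $m_i = \mult_{p_i}(C)$, every partial derivative of $F$ has a local expansion starting in degree at least $m_i - 1$, so $\mult_{p_i}(P_q) \geq m_i - 1$. Applying the standard lower bound for local intersection numbers of two curves meeting at a common point,
$$i_{p_i}(C, P_q) \;\geq\; \mult_{p_i}(C)\cdot \mult_{p_i}(P_q) \;\geq\; m_i(m_i - 1).$$

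Summing these estimates over $i$ and discarding the remaining non-negative contributions to the total B\'ezout number produces
$$\sum_{i=1}^{s} m_i(m_i-1) \;\leq\; \sum_{p} i_p(C, P_q) \;=\; d(d-1),$$
which is the claimed inequality. The only real obstacle is the short genericity check ensuring that $P_q$ is well-defined and shares no component with $C$; once this is in place, the argument is just bookkeeping around the local intersection inequality.
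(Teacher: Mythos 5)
Your argument is correct and is essentially the paper's own proof: the paper also intersects $C$ with a first polar curve (it takes the single partial $\partial f/\partial x$ rather than a generic linear combination of the partials), notes $\deg = d-1$ and $\mathrm{mult}_{p_i} \geq m_i - 1$, and concludes by B\'ezout exactly as you do. Your genericity check is a slightly more careful version of a step the paper leaves implicit, but the route is the same.
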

\begin{proof}
Let $C$ be an irreducible and reduced curve in the plane as in the theorem, and let us denote by $f$ the defining equation of $C$. Now we introduce a new curve $D$ defined by the equation $g := \frac{\partial{f}}{\partial{x}}$. We know that $\deg D = d-1$ and at each point $p_{i}$ we have $m_{i}(D) \geq m_{i}(C)-1$. Now by B\'{e}zout theorem, since $C$ is irreducible and reduced, one has
$$d(d-1) = C.D \geq \sum_{i = 1}^{s} m_{i}(D)m_{i}(C)  \geq \sum_{i=1}^{s} m_{i}(C)(m_{i}(C)-1),$$
which completes the proof.

Alternatively, one can use Pl\"{u}cker-Teissier formulae \cite[Theorem 7.2.2]{Wall} to obtain the same result.
\end{proof}
Proposition 2.5 and Lemma 2.3 give our second Ein-Lazarsfeld-Xu-type inequality.
\begin{lemma}
Let $C$ be an irreducible and reduced plane curve in the projective plane of degree $\deg C \geq 3$ having singular points $p_{1}, ..., p_{s}$, then
\begin{equation}
\label{ineq}
C^{2} = d^{2} \geq \sum_{i=1}^{s} m_{i}(m_{i}-1) +  {\rm gon}(C).
\end{equation}

\end{lemma}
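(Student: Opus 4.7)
The plan is simply to combine the two ingredients already assembled: Proposition 2.5, giving the arithmetic bound on the singularities, and Lemma 2.3, giving the gonality bound. The target inequality has $d^{2}$ on the left, while Proposition 2.5 naturally produces $d^{2}-d$. So I would first rewrite $d^{2} = (d^{2}-d)+d$ and apply Proposition 2.5 to the first summand, yielding
$$d^{2} \;\geq\; \sum_{i=1}^{s} m_{i}(m_{i}-1) + d.$$

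Then I would use Lemma 2.3, which guarantees $\operatorname{gon}(C) \leq d$, to replace the trailing $d$ by $\operatorname{gon}(C)$ at the cost of preserving the inequality, obtaining
$$d^{2} \;\geq\; \sum_{i=1}^{s} m_{i}(m_{i}-1) + \operatorname{gon}(C),$$
which is exactly \eqnref{ineq}. Since $C$ is irreducible and reduced of degree $d\geq 3$, the gonality bound applies without modification, and the self-intersection $C^{2} = d^{2}$ on $\mathbb{P}^{2}$ is automatic.

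There is essentially no obstacle here: the statement is an immediate two-line consequence of the preceding results, and the only subtlety is making sure the hypotheses of both Proposition 2.5 and Lemma 2.3 are met (irreducibility and reducedness are needed for B\'ezout in the proof of Proposition 2.5, and for the gonality statement). Both hold by assumption, so the proof reduces to the chain of inequalities above.
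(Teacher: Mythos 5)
Your argument is correct and is exactly the one the paper intends: the text states that the lemma follows from Proposition 2.5 together with Lemma 2.3, and your chain $d^{2} = (d^{2}-d)+d \geq \sum_{i} m_{i}(m_{i}-1) + d \geq \sum_{i} m_{i}(m_{i}-1) + \operatorname{gon}(C)$ is precisely that combination, spelled out. Nothing is missing.
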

Finally, let us present a third variation which follows directly from a recent result due to J. Huh \cite{Huh}.
\begin{lemma}
Let $C$ be an irreducible and reduced plane curve in the projective plane of degree $\deg C \geq 3$ having singular points $p_{1}, ..., p_{s}$. Let $o \in {\rm Sing}(C)$, then
$$C^{2} = d^{2} \geq m_{0} + \sum_{i=1}^{s} (m_{i}-1)^{2} +  2({\rm gon}(C) - 1).$$
\end{lemma}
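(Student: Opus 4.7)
The plan is to deduce the inequality as a short combination of Huh's recent bound on Milnor numbers of irreducible projective hypersurfaces \cite{Huh}, specialised to the setting of plane curves, together with the classical local lower bound $\mu_i \geq (m_i-1)^2$ for an isolated plane curve singularity of multiplicity $m_i$ (see \cite[Theorem~1.8]{Lin}) and with the elementary gonality estimate of Lemma~2.3. Morally, the proof should look identical in shape to Lemma~2.6: replace the global numerical input coming from B\'ezout by the deeper analytic input supplied by Huh.

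First, I would apply Huh's theorem to the irreducible reduced plane curve $C$ of degree $d \geq 3$ with isolated singularities $p_1, \dots, p_s$. Huh's bound improves on the naive estimate $(d-1)^N = (d-1)^2$ for $\mu(C) = \sum_{i=1}^s \mu_i$ by a correction that simultaneously encodes the gonality of $C$ and the multiplicity at any prescribed singular point $o \in {\rm Sing}(C)$; once this is rearranged so that $d^2$ is isolated, it reads precisely as
\[
d^2 \;\geq\; m_o + \mu(C) + 2\bigl({\rm gon}(C)-1\bigr).
\]
Here the irreducibility hypothesis on $C$ is essential: it is what unlocks the sharpened form of Huh's inequality and forces the appearance of the distinguished multiplicity $m_o$ on the right-hand side.

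Second, I would plug in the local Milnor number bound $\mu_i \geq (m_i-1)^2$ term by term, obtaining $\mu(C) \geq \sum_{i=1}^{s} (m_i-1)^2$, and substitute this on the right-hand side to arrive at
\[
d^2 \;\geq\; m_o + \sum_{i=1}^{s} (m_i-1)^2 + 2\bigl({\rm gon}(C)-1\bigr),
\]
as asserted.

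The only conceptual step is extracting Huh's inequality in the form that cleanly peels off the contribution $m_o$ of the fixed singular point from the remaining Milnor content; after that, the passage from $\mu_i$ to $(m_i-1)^2$ and the re-packaging of the gonality term are purely bookkeeping. The assumption $\deg C \geq 3$ is used only to guarantee that Lemma~2.3 gives a non-trivial gonality bound and that $C$ genuinely has the singular structure for which Huh's estimate is meaningful; no further analytic, Hodge-theoretic, or cohomological input is needed.
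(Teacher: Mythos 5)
Your proposal is correct and takes essentially the same route as the paper, whose entire proof is to quote Huh's inequality $(d-1)^{2} \geq m_{o} - 1 + \sum_{p \in {\rm Sing}(C)} \mu_{p}$ and leave the substitutions $\mu_{i}\geq (m_{i}-1)^{2}$ and ${\rm gon}(C)\leq d$ implicit. One small correction: the gonality term is not built into Huh's bound itself; it appears because $(d-1)^{2}=d^{2}-2(d-1)-1$, so the rearrangement yields $d^{2}\geq m_{o}+\mu(C)+2(d-1)$, and only then does Lemma~2.3 let you replace $2(d-1)$ by $2({\rm gon}(C)-1)$ --- exactly the bookkeeping you list among your ingredients.
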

\begin{proof}
Since $C$ is not a pencil of lines passing through a point, we can apply Huh's inequality \cite[Theorem~1]{Huh}, namely
$$(d-1)^{2} \geq m_{o} - 1 + \sum_{p \in {\rm Sing}(C)} \mu_{p},$$
where $\mu_{i}$'s are the corresponding Milnor numbers and $o\in C$ is a singular point.
\end{proof}

Now we would like to make use of (\ref{ineq}) in order to provide a lower-bound for Seshadri constants in special cases. 

Let $\mathcal{P}$ be a finite set of $s \geq 1$ points in the projective plane and assume that the Seshadri constant of $\mathcal{O}_{\mathbb{P}^2}(1)$ centered at $\mathcal{P}$ is computed by an irreducible and reduced curve having at each point $p_{i} \in \mathcal{P}$ multiplicity greater than $1$. The inequality (\ref{ineq}) provides that 
$$\deg (C) > \sqrt{\sum_{i}^{s}m_{i}(m_{i}-1)},$$
which leads us to
\begin{equation*}
\varepsilon(\mathbb{P}^{2}, \mathcal{O}_{\mathbb{P}^{2}}(1);\mathcal{P}) > \frac{\sqrt{\sum_{i}^{s}m_{i}(m_{i}-1)}}{\sum_{i=1}^{s} m_{i}} = \sqrt{\frac{\sum_{i}^{s}m_{i}(m_{i}-1)}{(\sum_{i=1}^{s}m_{i})^{2}}}.
\end{equation*}
Since $$s \sum_{i=1}^{s} m_{i}^{2} \geq \bigg(\sum_{i=1}^{s} m_{i}\bigg)^{2}$$
we get
$$\frac{\sum_{i}^{s}m_{i}(m_{i}-1)}{(\sum_{i=1}^{s}m_{i})^{2}} \geq \frac{1}{s}\cdot \frac{\sum_{i=1}^{s}m_{i}^2 - \sum_{i=1}^{s}m_{i}}{\sum_{i=1}^{s}m_{i}^2} \geq \frac{1}{2s},$$
which gives us at the end the following lower bound
\begin{equation}\label{bound}
\varepsilon(\mathbb{P}^{2}, \mathcal{O}_{\mathbb{P}^{2}}(1);\mathcal{P}) > \sqrt{\frac{1}{2s}}.
\end{equation}

It is natural to ask whether there exists a configuration of points for which the Seshadri constant of $\mathcal{O}_{\mathbb{P}^{2}}(1)$ is computed by an irreducible and reduced curve having at each point of the configuration multiplicity greater than one. As an example which presents this phenomenon, let us focus on the so-called Severi curves.

\begin{definition}[\cite{Severi}]
A reduced and irreducible plane curve $D \subset \mathbb{P}^{2}$ is called \emph{a Severi curve} if the number of nodes of $D$ is equal to $\delta:= \frac{(d-1)(d-2)}{2}$ and these are the only singular points of $D$.
\end{definition}
\begin{proposition}
\label{prop1}
Let $D \subset \mathbb{P}^{2}$ be a Severi curve of degree $d \geq 6$. Then
$$\varepsilon(\mathbb{P}^{2}, \mathcal{O}_{\mathbb{P}^{2}}(1); {\rm Sing}(D)) = \frac{d}{(d-1)(d-2)}.$$
\end{proposition}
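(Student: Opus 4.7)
The plan is to pin down the Seshadri constant by showing that $D$ itself is a Seshadri-computing curve, and that no other irreducible reduced curve can do better. This is a two-inequality argument, with Bézout doing all of the work on the nontrivial side.

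For the upper bound, I would plug $D$ directly into the defining infimum. Since $D$ is irreducible and reduced, $\deg D = d$, the singular set $\mathrm{Sing}(D)$ consists of $\delta = \frac{(d-1)(d-2)}{2}$ nodes, and each node has multiplicity exactly $2$, one immediately obtains
\[
\varepsilon(\P^2,\calo_{\P^2}(1);\mathrm{Sing}(D)) \;\leq\; \frac{\deg D}{\sum_{p\in\mathrm{Sing}(D)}\mathrm{mult}_p D} \;=\; \frac{d}{2\delta} \;=\; \frac{d}{(d-1)(d-2)}.
\]

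For the matching lower bound, I would let $C$ be any irreducible reduced curve meeting $\mathrm{Sing}(D)$ and compare it to $D$. If $C=D$ the ratio equals $\frac{d}{(d-1)(d-2)}$, so the interesting case is $C\neq D$, in which case $C$ and $D$ share no common component and Bézout applies. Writing $\mathrm{mult}_{p_i}(C)=:m_i(C)\geq 0$ at the nodes $p_i$ of $D$, and using that each $p_i$ is a double point of $D$, the local intersection inequality gives
\[
d\cdot\deg C \;=\; C\cdot D \;\geq\; \sum_{i=1}^{\delta} m_i(C)\cdot \mathrm{mult}_{p_i}(D) \;=\; 2\sum_{i=1}^{\delta} m_i(C),
\]
so $\frac{\deg C}{\sum_i m_i(C)}\geq \frac{2}{d}$.

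It then remains to compare $\frac{2}{d}$ with $\frac{d}{(d-1)(d-2)}$: the inequality $\frac{2}{d}\geq\frac{d}{(d-1)(d-2)}$ rearranges to $d^2-6d+4\geq 0$, whose positive root is $3+\sqrt5\approx 5.24$, hence it holds precisely for $d\geq 6$ — which is exactly the hypothesis. Combining the two bounds yields the stated equality. The only real point of care is to record that $C\neq D$ ensures the two curves intersect properly so that Bézout is legitimate; everything else is mechanical. The degree hypothesis $d\geq 6$ is not a slack assumption but the sharp threshold at which the Bézout lower bound $\frac{2}{d}$ overtakes the test-curve upper bound $\frac{d}{(d-1)(d-2)}$, so it enters naturally and not via any auxiliary estimate.
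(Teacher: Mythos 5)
Your argument is correct and is essentially the paper's proof: both use $D$ itself as the test curve for the upper bound and apply B\'ezout against $D$ (with each node contributing multiplicity $2$) to get the lower bound, arriving at the same threshold inequality $d^{2}-6d+4\geq 0$ for $d\geq 6$. The only cosmetic difference is that you phrase the comparison directly, whereas the paper runs it as a proof by contradiction.
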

\begin{proof}
First of all, observe that the Seshadri ratio obtained by $D$ is equal to $\frac{d}{(d-1)(d-2)}$. In order to compete the proof, suppose that there exists an irreducible and reduced curve $C$, distinct from $D$, of degree $e$ in the plane having multiplicities $m_{1}(C), ..., m_{\delta}(C)$ at the singular points $p_{1}, ...,p_{\delta} \in {\rm Sing}(D)$ such that
$$\frac{e}{\sum_{i=1}^{\delta} m_{i}(C)} < \frac{d}{(d-1)(d-2)}.$$
This implies

$$\sum_{i=1}^{\delta} m_{i}(C) > \frac{e}{d}(d-1)(d-2).$$ 
Our aim is to make use of B\'{e}zout's Theorem. Observe that 
$$D.C = e.d  \geq \sum_{i=1}^{\delta} 2 m_{i}(C) > 2 \frac{e}{d}(d-1)(d-2).$$
Simple manipulations give 
$$d^{2} - 6d + 4 < 0,$$
a contradiction with the fact that $d \geq 6$.
\end{proof}
It is natural to ask whether, if at all, our bound (\ref{bound}) is sharp. We check our result in the case of Severi curves. Denote by $f(x) = \frac{x}{(x-1)(x-2)}$ and $g(x) = \sqrt{\frac{1}{(x-1)(x-2)}}$. Then
$$\begin{array}{c|cccccccc}
   x    & 6      & 7      & 8      & 9      & 10     & 20     & 50     & 100     \\ \hline
   f(x) & 0.3    & 0.2333 & 0.1904 & 0.1607 & 0.1388 & 0.0584 & 0.0212 & 0.0103  \\ \hline
   g(x) & 0.2236 & 0.1825 & 0.1543 & 0.1336 & 0.1178 & 0.0540 & 0.0206 & 0.0101
   \end{array}.$$
It means that for Severi curves our bound (\ref{bound}) is asymptotically sharp.

At the end of this section, let us emphasize that since our curves are (presumably) highly singular, the first gonality bound is very coarse. If $m_{1}$ is a maximal multiplicity of $C$, then (for instance by \cite{OF}):
$${\rm gon}(C) \leq d - m_{1}$$
This observation allows us to improve our Ein-Lazarsfeld-Xu-type lemmas, in the case of (\ref{ineq}) we obtain
$$C^{2} = d^{2} \geq \sum_{i=2}^{s} m_{i}(m_{i}-1) + m_{1}^{2} + {\rm gon}(C).$$
\section{Special configurations of points in the plane and Seshadri constants}
In this section, we focus on certain point configurations in the projective plane which are given by singular loci of certain line arrangements. We compute the multi-point Seshadri constants of $\mathcal{O}_{\mathbb{P}^{2}}(1)$ centered at the singular loci of those configurations. 

Our main aim here is to understand how accurate is the following question.
\begin{question}
Let $\mathcal{L} \subset \mathbb{P}^{2}$ be a line arrangement and denote by ${\rm Sing}(\mathcal{L})$ the singular locus of $\mathcal{L}$. For a given line $\ell \in \mathcal{L}$ we denote by $s(\ell)$ the number of singular points from ${\rm Sing}(\mathcal{L})$ contained in $\ell$ and by $s(\mathcal{L}) = \max_{\ell \in \mathcal{L}} s(\ell)$. Is it true that
$$\varepsilon(\mathbb{P}^{2}, \mathcal{O}_{\mathbb{P}^{2}}(1); \, {\rm Sing}(\mathcal{L})) = \frac{1}{s(\mathcal{L})} \, ?$$
In particular, is it true that in the above setting the multi-point Seshadri constant is computed by one of lines from the arrangement?
\end{question}
We start our considerations with a quite specific class of line arrangements satisfying \textit{Hirzebruch's property}.
\begin{definition}
Let $\mathcal{L} \subset \mathbb{P}^{2}$ be a line arrangement. We say that $\mathcal{L}$ satisfies Hirzebruch's property if the number of lines is equal to $3n$ for some $n \in \mathbb{Z}_{>0}$ and each line from $\mathcal{L}$ intersects others at exactly $n+1$ points.
\end{definition}
Conjecturally, all line arrangements satisfying Hirzebruch's property are those defined by complex reflection groups, see for instance \cite[Conjecture~5.1]{Panov}.
\begin{proposition}
\label{Panov}
Let $\mathcal{L} = \{\ell_{1}, ..., \ell_{3n}\} \subset \mathbb{P}^2$ be a line arrangement satisfying Hirzebruch's property and such that all singular points have multiplicity greater than two. Then
$$\varepsilon(\mathbb{P}^{2}, \mathcal{O}_{\mathbb{P}^{2}}(1); \, {\rm Sing}(\mathcal{L})) = \frac{1}{n+1}.$$
\end{proposition}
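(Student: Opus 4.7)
The plan is a two-sided estimate: produce a curve attaining $\frac{1}{n+1}$ for the upper bound, and then rule out any better curve by a global B\'ezout-summation argument that exploits the Hirzebruch property.

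\textbf{Upper bound.} Any single line $\ell \in \mathcal{L}$ is a legitimate test curve for the Seshadri constant. By Hirzebruch's property, $\ell$ meets the union of the other $3n-1$ lines in exactly $n+1$ points, and all of these points lie in $\mathrm{Sing}(\mathcal{L})$. Since $\ell$ is smooth at each of them, $\mathrm{mult}_{p}\ell = 1$, and thus the Seshadri ratio computed by $\ell$ equals $\frac{1}{n+1}$.

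\textbf{Lower bound.} Let $C$ be any irreducible reduced curve of degree $d$ meeting $\mathrm{Sing}(\mathcal{L})$; set $m_p := \mathrm{mult}_p C$. If $C$ coincides with some $\ell \in \mathcal{L}$, the ratio equals $\frac{1}{n+1}$ as above. Otherwise $C$ shares no component with any line of $\mathcal{L}$, so B\'ezout's Theorem applied to the pair $(C,\ell_j)$ gives
$$d = C\cdot\ell_j \;\geq\; \sum_{p\in\ell_j \cap \mathrm{Sing}(\mathcal{L})} m_p,$$
for every $j=1,\dots,3n$. Summing over all lines and swapping the order of summation,
$$3nd \;\geq\; \sum_{p\in\mathrm{Sing}(\mathcal{L})} k_p\, m_p,$$
where $k_p$ denotes the number of lines of $\mathcal{L}$ through $p$. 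The hypothesis that every singular point of $\mathcal{L}$ has multiplicity greater than two translates into $k_p\geq 3$ for all such $p$, so
$$3nd \;\geq\; 3\sum_{p} m_p, \qquad \text{i.e.,} \qquad \frac{d}{\sum_p m_p}\;\geq\;\frac{1}{n} \;>\;\frac{1}{n+1}.$$
Therefore no irreducible reduced curve other than a line of $\mathcal{L}$ can compute, or beat, the ratio $\frac{1}{n+1}$.

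\textbf{Conclusion.} The infimum over all irreducible reduced test curves is attained by any $\ell\in\mathcal{L}$ and equals $\frac{1}{n+1}$. The only step requiring some care is handling the case $C\in\mathcal{L}$ separately (where B\'ezout degenerates because $C$ shares a component with one of the summands); once that case is treated by direct inspection, the whole argument is a one-line double-counting, and the assumption $k_p\geq 3$ is exactly what converts the crude bound $nd\geq \sum_p m_p$ into a bound strictly stronger than $\frac{1}{n+1}$. I expect no deeper obstacle: the Hirzebruch condition supplies precisely the combinatorial regularity needed to sum B\'ezout uniformly over $\mathcal{L}$.
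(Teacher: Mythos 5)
Your proof is correct and follows essentially the same route as the paper: a single line of $\mathcal{L}$ gives the ratio $\frac{1}{n+1}$, and for any other irreducible curve one applies B\'ezout against the divisor $\ell_1+\dots+\ell_{3n}$ and uses that every singular point has multiplicity at least $3$ to get $3nd \geq 3\sum_p m_p$. The only cosmetic difference is that you state the resulting bound $\frac{d}{\sum_p m_p}\geq \frac{1}{n}$ directly, whereas the paper runs the same computation as a proof by contradiction.
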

\begin{proof}
If we take a line $\ell_{i}$ from the arrangement $\mathcal{L}$, the Seshadri ratio is given by $\frac{1}{n+1}$. Suppose that there exists an irreducible and reduced curve $D$ of degree $d$, different from $\ell_{i}$ for every $i$, having multiplicities $m_{1}(D), ..., m_{s}(D)$ at singular points $p_{1}, ..., p_{s} \in {\rm Sing}(\mathcal{L})$ such that 
$$\frac{d}{\sum_{i=1}^{s} m_{i} (D)} < \frac{1}{n+1}.$$
This implies, in particular, that 
$$\sum_{i=1}^{s} m_{i} (D) > d(n+1).$$
Denote by $n_{i}(\mathcal{L})$ the multiplicity of a point $p_{i} \in {\rm Sing}(\mathcal{L})$, then 
$$3nd = D.(\ell_{1} + ... + \ell_{3n})\geq \sum_{i=1}^{s}m_{i}(D)n_{i}(\mathcal{L}) \geq 3 \sum_{i=1}^{s}m_{i}(D) > 3d(n+1),$$
a contradiction.
\end{proof}
\begin{example}
There exists an infinite series of line arrangements satisfying Hirzebruch's property and such that all singular points have multiplicity greater than two. Let us recall that $n$-th CEVA arrangement of lines with $n\geq 3$ (see for instance \cite[Page~206]{BHH87}) is given by the linear factors of the following polynomial
$$Q(x,y,z) = (x^{n}-y^{n})(y^{n}-z^{n})(z^{n}-x^{n}).$$
We know that $n$-th CEVA arrangement $\mathcal{C}_{n}$ consists of $3n$ lines and has exactly $3$ points of multiplicity $n$ and $n^{2}$ triple points, and on each line we have exactly $n+1$ singular points. This gives us
$$\varepsilon(\mathbb{P}^{2}, \mathcal{O}_{\mathbb{P}^{2}}(1); {\rm Sing}(\mathcal{C}_{n})) = \frac{1}{n+1}.$$
\end{example}
\begin{example}
Let us recall that Klein's arrangement of lines $\mathcal{K}$ \cite{Klein} consists of $21$ lines with exactly $21$ quadruple points and $28$ triple points. Klein's arrangement possesses Hirzebruch's property, and 
$$\varepsilon(\mathbb{P}^{2}, \mathcal{O}_{\mathbb{P}^{2}}(1); \, {\rm Sing}(\mathcal{K})) = \frac{1}{8}.$$

Another interesting example is given by Wiman's arrangement of lines $\mathcal{W}$ \cite{Wiman96} which consists of $45$ lines and exactly $120$ triple points, $45$ quadruple points, and $36$ quintuple points. Wiman's arrangement also possesses Hirzebruch's property, and 
$$\varepsilon(\mathbb{P}^{2}, \mathcal{O}_{\mathbb{P}^{2}}(1); \, {\rm Sing}(\mathcal{W})) = \frac{1}{16}.$$
\end{example}
In the before mentioned paper \cite{Panov}, D. Panov proved that there are exactly four real line arrangements (i.e., line arrangements defined over the real numbers) satisfying Hirzebruch's property, and these are reflection arrangements of certain Coxeter groups. Let us now present a numerical description of these line arrangements:
\begin{enumerate}
\item[a)] three generic lines intersecting at exactly three double points;
\item[b)] the well-known $\mathcal{A}_{1}(6)$ arrangement of $6$ lines consisting of $4$ triple points, and $3$ double points;
\item[c)] the arrangement $\mathcal{A}_{1}(9)$ of $9$ lines (four sides of a square in $\mathbb{R}^{2}$, four symmetry axes of the square, plus line at infinity), consisting of $3$ quadruple points, $4$ triple points, and $6$ double points;
\item[d)] the arrangement $\mathcal{A}_{1}(15)$ of $15$ lines (five sides of a regular pentagon in $\mathbb{R}^{2}$, five axes of symmetry, and five diagonals of the pentagon) consisting of $6$ quintuple points, $10$ triple points, and $15$ double points.
\end{enumerate}

Now we compute Seshadri constants for $\mathcal{O}_{\mathbb{P}^{2}}(1)$ for point configurations given by a), b), c), and d) -- notice that we cannot apply Proposition \ref{Panov} due to the fact that the arrangements above possess double points as intersections.

In cases a) and b), it is easy to see that the Seshadri constants are equal to $\frac{1}{2}$ and $\frac{1}{3}$, respectively, and this can be done by ad hoc arguments, so now we show how to deal with c).

Our aim is to pick in an appropriate manner exactly four lines from the configurations $\mathcal{A}_{1}(9)$ in such a way that these lines are passing through all singular points at least once. This can be achieved by some choice of lines, for instance we can take a subconfiguration $\mathcal{L} = \{\ell_{\infty}, \ell_{1}, \ell_{2}, \ell_{3}\}$ consisting of the line at infinity and three vertical lines (lines defined by two parallel sides of the square and the corresponding symmetry axis). After possible relabeling of the indices of points, we have that $m_{i}= 1$ for $i \in \{1, ..., 12\}$ and $m_{13} = 4$.
We claim that
$$\varepsilon (\mathbb{P}^{2}, \mathcal{O}_{\mathbb{P}^{2}}(1); \, {\rm Sing}(\mathcal{A}_{1}(9))) = \frac{1}{4}.$$
Suppose that there exists an irreducible and reduced curve $D$ of degree $e$ having multiplicities $n_{1}, ..., n_{13}$ such that
$$\frac{e}{\sum_{i=1}^{13}n_{i}} < \frac{1}{4},$$
This gives us, in particular, that
$$\sum_{i=1}^{13}n_{i} > 4e.$$
Now
$$4e = D.(\ell_{\infty} + \ell_{1} + \ell_{2} + \ell_{3}) \geq n_{1} + ... + n_{12} + 4n_{13} \geq \sum_{i=1}^{13}n_{i} > 4e,$$
a contradiction.

In the last case d), our strategy is quite similar, we are going to pick $6$ appropriate lines in order to obtain vanishing along all $31$ singular points of $\mathcal{A}_{1}(15)$. Let us present our choice which is depicted in Figure \ref{fig} by dashed lines $L_{1},L_{2},L_{3},L_{4},L_{5}$, and $L_{6}$ -- notice that $L_{1}$ is not a line from the arrangement.
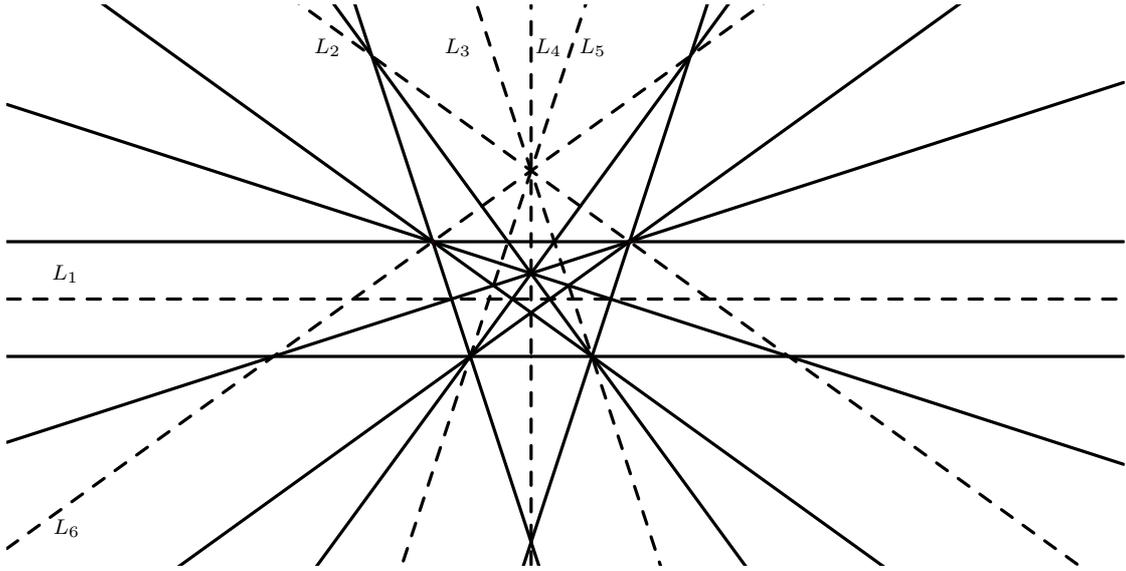
\begin{figure}[ht]
\centering
\definecolor{ttqqqq}{rgb}{0.2,0.,0.}
\begin{tikzpicture}[line cap=round,line join=round,>=triangle 45,x=1.0cm,y=1.0cm,scale=0.4]
\clip(-15.2361190567846,-6.910385475528476) rectangle (21.482657859816634,11.644778452114247);
\draw [line width=1.2pt,domain=-15.2361190567846:21.482657859816634] plot(\x,{(--19.91918627906224-1.9021130325903082*\x)/5.854101966249685});
\draw [line width=1.2pt,domain=-15.2361190567846:21.482657859816634] plot(\x,{(-0.--4.9797965697655595*\x)/3.618033988749895});
\draw [line width=1.2pt,dash pattern=on 5pt off 5pt] (2.,-6.910385475528476) -- (2.,11.644778452114247);
\draw [line width=1.2pt,domain=-15.2361190567846:21.482657859816634] plot(\x,{(--12.310734148701012--1.9021130325903055*\x)/5.854101966249685});
\draw [line width=1.2pt,domain=-15.2361190567846:21.482657859816634] plot(\x,{(--19.91918627906224-4.97979656976556*\x)/3.6180339887498945});
\draw [line width=1.2pt,domain=-15.2361190567846:21.482657859816634] plot(\x,{(-0.-3.8042260651806146*\x)/1.2360679774997894});
\draw [line width=1.2pt,domain=-15.2361190567846:21.482657859816634] plot(\x,{(-0.-0.*\x)/4.});
\draw [line width=1.2pt,domain=-15.2361190567846:21.482657859816634] plot(\x,{(-15.216904260722451--3.804226065180613*\x)/1.2360679774997898});
\draw [line width=1.2pt,dash pattern=on 5pt off 5pt,domain=-15.2361190567846:21.482657859816634] plot(\x,{(-24.621468297402025--2.3511410091698934*\x)/-3.2360679774997894});
\draw [line width=1.2pt,dash pattern=on 5pt off 5pt,domain=-15.2361190567846:21.482657859816634] plot(\x,{(--15.216904260722456--2.3511410091698917*\x)/3.23606797749979});
\draw [line width=1.2pt,dash pattern=on 5pt off 5pt,domain=-15.2361190567846:21.482657859816634] plot(\x,{(-0.-6.155367074350506*\x)/-2.});
\draw [line width=1.2pt,dash pattern=on 5pt off 5pt,domain=-15.2361190567846:21.482657859816634] plot(\x,{(--24.621468297402025-6.155367074350506*\x)/2.});
\draw [line width=1.2pt,domain=-15.2361190567846:21.482657859816634] plot(\x,{(--24.621468297402025-0.*\x)/6.47213595499958});
\draw [line width=1.2pt,domain=-15.2361190567846:21.482657859816634] plot(\x,{(-0.--3.804226065180613*\x)/5.23606797749979});
\draw [line width=1.2pt,domain=-15.2361190567846:21.482657859816634] plot(\x,{(--15.216904260722458-3.8042260651806146*\x)/5.23606797749979});
\draw [line width=1.2pt,dash pattern=on 5pt off 5pt,domain=-15.2361190567846:21.482657859816634] plot(\x,{(--9.95959313953112-0.*\x)/5.23606797749979});
\begin{scriptsize}
\draw [fill=black] (0.,0.) circle (1.0pt);
\draw [fill=black] (4.,0.) circle (1.0pt);
\draw [fill=ttqqqq] (5.23606797749979,3.804226065180613) circle (2.0pt);
\draw [fill=black] (2.,6.155367074350506) circle (2.0pt);
\draw [fill=ttqqqq] (-1.2360679774997894,3.8042260651806146) circle (2.0pt);
\draw [fill=black] (0.38196601125010554,4.97979656976556) circle (1.0pt);
\draw [fill=black] (3.618033988749895,4.9797965697655595) circle (1.0pt);
\draw [fill=black] (4.618033988749895,1.9021130325903064) circle (1.0pt);
\draw [fill=black] (2.,0.) circle (1.0pt);
\draw [fill=black] (-0.6180339887498947,1.9021130325903073) circle (1.0pt);
\draw[color=black] (2.568572238380157,10.252597336792869) node {$L_{4}$};
\draw[color=black] (-4.696872957203263,10.252597336792869) node {$L_{2}$};
\draw[color=black] (-13.267487948400472,-5.692226999622271) node {$L_{6}$};
\draw[color=black] (4.0260118434822205,10.252597336792869) node {$L_{5}$};
\draw[color=black] (-0.411565461604659,10.252597336792869) node {$L_{3}$};
\draw [fill=black] (2.,2.7527638409423467) circle (1.0pt);
\draw [fill=black] (1.23606797749979,3.804226065180614) circle (1.0pt);
\draw [fill=black] (2.7639320225002106,3.804226065180614) circle (1.0pt);
\draw [fill=black] (3.,3.0776835371752527) circle (1.0pt);
\draw [fill=black] (3.23606797749979,2.3511410091698917) circle (1.0pt);
\draw [fill=black] (2.6180339887498953,1.9021130325903068) circle (1.0pt);
\draw [fill=black] (2.,1.4530850560107211) circle (1.0pt);
\draw [fill=black] (1.3819660112501055,1.9021130325903073) circle (1.0pt);
\draw [fill=black] (0.7639320225002105,2.351141009169892) circle (1.0pt);
\draw [fill=black] (-6.472135954999584,0.) circle (1.0pt);
\draw [fill=black] (10.472135954999574,0.) circle (1.0pt);
\draw [fill=black] (2.,-6.155367074350508) circle (1.0pt);
\draw [fill=black] (2.,3.8042260651806132) circle (1.0pt);
\draw [fill=black] (-3.2360679774997885,9.959593139531119) circle (1.0pt);
\draw [fill=black] (7.236067977499792,9.959593139531123) circle (1.0pt);
\draw [fill=black] (1.,3.077683537175253) circle (1.0pt);
\draw[color=black] (-13.310993608254265,2.7478710120135745) node {$L_{1}$};
\end{scriptsize}
\end{tikzpicture}
\caption{Arrangement $\mathcal{A}_{1}(15)$ plus one additional line $L_{1}$. \label{fig}}
\end{figure}

Now we are aiming to show that 
$$\varepsilon (\mathbb{P}^{2}, \mathcal{O}_{\mathbb{P}^{2}}(1); \, {\rm Sing}(\mathcal{A}_{1}(15))) = \frac{1}{6}.$$
Suppose that there exists an irreducible and reduced curve $D$ of degree $e$ having multiplicities $n_{1}(D),..., n_{31}(D)$ at points ${\rm Sing}(\mathcal{A}_{1}(15)) = \{p_{1}, ...,p_{31}\}$ such that
$$\frac{e}{\sum_{i=1}^{31}n_{i}(D)} < \frac{1}{6}.$$ Moreover, we denote by $m_{i}(C)$'s the multiplicities of $C = L_{1} + ... + L_{6}$ at ${\rm Sing}(\mathcal{A}_{1}(15))$.
Observe that
$$D.C= 6e \geq \sum_{i=1}^{31}m_{i}(C)n_{i}(D) \geq \sum_{i=1}^{31}n_{i}(D) > 6e,$$
a contradiction.

The previous arrangements are quite rigid from a point of view of combinatorics and geometry, so at the end of the note we turn our attention towards very simple arrangements of lines.
\begin{definition}
We say that $\mathcal{A}_{d} \subset \mathbb{P}^{2}$ is a \textit{star arrangement} of $d$ lines if all intersection points are double points.
\end{definition}
It is easy to see that each line from $\mathcal{A}_{d}$ contains exactly $d-1$ double points from the arrangement.
\begin{proposition}
Let $\mathcal{A}_{d} \subset \mathbb{P}^{2}$ be a star arrangement of $d \geq 3$ lines. Then
$$\varepsilon(\mathbb{P}^{2}, \mathcal{O}_{\mathbb{P}^{2}}(1); {\rm Sing}(\mathcal{A}_{d})) = \frac{1}{d-1}.$$
\end{proposition}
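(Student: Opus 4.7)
The plan is to mimic the Bézout-style arguments already used in Proposition \ref{prop1} and Proposition \ref{Panov}. First I would establish the upper bound by exhibiting a curve which realizes the ratio $\frac{1}{d-1}$. Any line $\ell \in \mathcal{A}_d$ meets the other $d-1$ lines in $d-1$ distinct double points of the arrangement, so the Seshadri ratio of $\ell$ with respect to $\mathrm{Sing}(\mathcal{A}_d)$ is
$$\frac{\deg \ell}{\sum_{p\in \mathrm{Sing}(\mathcal{A}_d)} \mathrm{mult}_p(\ell)} = \frac{1}{d-1},$$
which immediately yields $\varepsilon(\mathbb{P}^2,\mathcal{O}_{\mathbb{P}^2}(1);\mathrm{Sing}(\mathcal{A}_d)) \leq \frac{1}{d-1}$.

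For the lower bound, I would argue by contradiction. Write $\mathcal{A}_d = \{\ell_1,\dots,\ell_d\}$ and enumerate $\mathrm{Sing}(\mathcal{A}_d) = \{p_1,\dots,p_s\}$ with $s=\binom{d}{2}$. Suppose there existed an irreducible reduced curve $D$ of degree $e$, necessarily distinct from every $\ell_i$, with multiplicities $n_i = \mathrm{mult}_{p_i}(D)$ satisfying
$$\frac{e}{\sum_{i=1}^{s} n_i} < \frac{1}{d-1}, \qquad \text{i.e.,} \qquad \sum_{i=1}^{s} n_i > (d-1)e.$$
The key structural fact is that the reducible curve $C := \ell_1 + \cdots + \ell_d$ has multiplicity exactly $2$ at every singular point of $\mathcal{A}_d$, since each $p_i$ lies on precisely two of the $\ell_j$. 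Applying Bézout to $D$ and $C$ (valid because $D$ is irreducible and coincides with no $\ell_j$) gives
$$de = D\cdot C \geq \sum_{i=1}^{s} 2 n_i > 2(d-1)e.$$
Dividing by $e$ yields $d > 2(d-1)$, i.e., $d < 2$, contradicting $d \geq 3$.

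There is no real obstacle here beyond checking that $D$ may be assumed distinct from every line in $\mathcal{A}_d$: if $D = \ell_i$ for some $i$, then $D$ already realizes the ratio $\frac{1}{d-1}$ and does not contradict the claimed value. Thus the Bézout estimate applies in the only case that matters, and the two inequalities combine to give the equality $\varepsilon(\mathbb{P}^2,\mathcal{O}_{\mathbb{P}^2}(1);\mathrm{Sing}(\mathcal{A}_d)) = \frac{1}{d-1}$.
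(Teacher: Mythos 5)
Your proof is correct and is exactly the argument the paper intends: the paper's proof of this proposition simply says it ``follows by (almost) the same argument as in Proposition \ref{Panov}'', namely intersecting $D$ with the sum of all lines of the arrangement and using that every singular point is a double point. You have merely written out the details that the paper leaves implicit.
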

\begin{proof}
This follows by (almost) the same argument as in Proposition \ref{Panov}.
\end{proof}
\begin{definition}
A line arrangement $\mathcal{H}_d \subset \mathbb{P}^2$ with $d\geq 3$ is called Hirzebruch's quasipencil if it consists of one point of multiplicity $d-1$ and $d-1$ double points. 
\end{definition}
We can assume that for Hirzebruch's quasipencil $\mathcal{H}_d= \{\ell_1, ... , \ell_d \}$ the point of multiplicity $d-1$ is defined by the intersection of $\ell_1 , ..., \ell_{d-1}$.

\begin{proposition}
Let $\mathcal{H}_{d} \subset \mathbb{P}^{2}$ be a Hirzebruch's quasipencil of $d \geq 3$ lines. Then
$$\varepsilon(\mathbb{P}^{2}, \mathcal{O}_{\mathbb{P}^{2}}(1); {\rm Sing}(\mathcal{H}_{d})) = \frac{1}{d-1}.$$
\end{proposition}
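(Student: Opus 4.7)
The plan is to follow the template of Proposition 3.3 and the star arrangement case verbatim: first identify one line of $\mathcal{H}_d$ that realises the Seshadri ratio $1/(d-1)$, then rule out every irreducible reduced competitor by B\'ezout. For the upper bound, note that the ``transversal'' line $\ell_d$ misses the multiplicity-$(d-1)$ point $p_0$ but meets each of the $d-1$ double points $p_1,\ldots,p_{d-1}$ with multiplicity one, hence produces Seshadri ratio $\tfrac{1}{d-1}$.

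For the lower bound, suppose toward a contradiction that an irreducible reduced curve $D$ of degree $e$ with multiplicities $n_0, n_1,\ldots, n_{d-1}$ at the singular points $p_0,p_1,\ldots, p_{d-1}$ satisfies $\sum_{i=0}^{d-1} n_i > (d-1)\, e$. If $D$ coincides with one of the lines of $\mathcal{H}_d$, direct inspection settles it: $D=\ell_d$ gives $\sum n_i = d-1 = (d-1)e$, while $D=\ell_i$ for some $i<d$ gives $\sum n_i = 2 \le d-1 = (d-1)e$, neither of which violates the displayed strict inequality, a contradiction.

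Otherwise B\'ezout's theorem applies to every arrangement line. Applied to $\ell_d$ it yields $\sum_{i=1}^{d-1} n_i \le e$, and applied to any $\ell_j$ with $1\le j\le d-1$ (which passes through both $p_0$ and $p_j$) it yields $n_0 + n_j \le e$, so in particular $n_0 \le e$. Summing these two estimates gives $\sum_{i=0}^{d-1} n_i \le 2e \le (d-1)e$ since $d\ge 3$, contradicting the standing assumption. The delicate point, which is also the main obstacle, is that the asymmetry of $\mathcal{H}_d$ (one point of multiplicity $d-1$ together with many simple nodes) forbids a one-shot B\'ezout against the full divisor $\ell_1+\cdots+\ell_d$ of the style used in Proposition \ref{Panov} or the star-arrangement proposition: the node contributions there would only supply a factor of $2$, not $d-1$. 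One must therefore decouple the control of $n_0$ from that of the remaining $n_i$ by pairing $\ell_d$ (which sees all the nodes but not $p_0$) against a single line through $p_0$.
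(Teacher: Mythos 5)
Your argument is correct and is essentially the paper's own proof: the paper establishes the upper bound with $\ell_d$ and then computes $D\cdot(\ell_1+\ell_d)=2e\ge\sum_i m_i>(d-1)e$, which is precisely your two B\'ezout estimates (against $\ell_d$ and against one line through $p_0$) added together. Only your closing parenthetical is off: the one-shot B\'ezout against the full divisor $\ell_1+\cdots+\ell_d$ would in fact also succeed here, since $de\ge 2\sum_i n_i>2(d-1)e$ already forces $d<2$; this does not affect the validity of the proof you gave.
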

\begin{proof}
If we take $\ell_{d}$ from the arrangement $\mathcal{H}_{d}$, the Seshadri ratio is given by $\frac{1}{d-1}$. Let $D$ be an irreducible and reduced curve of degree $e$ such that $D\neq \ell_{i}$, for all $i \in \{1,...,d\}$, having multiplicities $m_1, ..., m_d$ at the points ${\rm Sing}(\mathcal{H}_d)$, and
$$\frac{e}{\sum_{i=1}^{d}m_{i}} < \frac{1}{d-1}.$$
Since
$$D(\ell_1 + \ell_d) = 2e \geq \sum_{i=1}^{d} m_i > e(d-1),$$
a contradiction.
\end{proof}
In our considerations the key role is played by information about the number of singular points on each line from the arrangement. It is not difficult to see that if $\mathcal{L} \subset \mathbb{P}^2$ is an arrangement of $d$ lines (we can assume that $\mathcal{L}$ is not a pencil of lines), then for each line $\ell_j$ the number of singular points $r_j := \#{\rm Sing}(\mathcal{L}) \cap \ell_j$ on $\ell_j$ is bounded from above by $d-1$, which can be easily verified using the following combinatorial equality
$$d-1 = \sum_{p \in {\rm Sing}(\mathcal{L}) \cap \ell_j} (m_p - 1).$$
However, we are not aware of "reasonable" lower bounds for $r_j$. This leads to the following simple question.
\begin{question}
Is it true that for an arrangement $\mathcal{L}\subset \mathbb{P}^2$ of $d\geq 3$ lines, which is not a pencil of lines, there exists a positive integer $C$ such that
$$\max_{j=1}^{d} r_j \geq \frac{1}{C} \cdot d ?$$
\end{question}
\section{Concluding remarks}
We now make a few remarks exhibiting some interesting similarities and differences between the behavior of Seshadri constants at very general points and special points in $\mathbb{P}^{2}$.
\begin{remark}
It might happen that for \emph{special configurations} of $s\geq 1$ points the Seshadri constant of $\mathcal{O}_{\mathbb{P}^{2}}(1)$ has the same value as in the case of $s$ very general points. Let us present an interesting example. In his Diplomarbeit, K. Ivinskis explained that there exists an irreducible and reduced curve $C_{d}$ of degree $d = 6k$ with $k\geq 1$ having exactly $9k^{2}$ ordinary cusps \cite[Lemma~4.1.7]{Ivinskis}. This curve is constructed by using the Kummer cover $\phi: \mathbb{P}^{2} \ni (x,y,z) \mapsto (x^{n},y^{n},z^{n}) \in \mathbb{P}^{2}$ which is branched along $xyz = 0$, please consult \cite[Section~7]{Persson} for details. It is easy to check that
$$\varepsilon (\mathbb{P}^{2}, \mathcal{O}_{\mathbb{P}^{2}}(1); {\rm Sing}(C_{d})) = \frac{1}{3k}.$$

Notice that we obtained exactly the same value as the Seshadri constant of $\mathcal{O}_{\mathbb{P}^{2}}(1)$ centered at configurations of $9k^{2}$ points with $k\geq 1$ in very general position. 
\end{remark}
\begin{remark}
It was shown in \cite{Seredica}, irreducible and reduced curves $C \subset \mathbb{P}^{2}$ computing the multi-point Seshadri constants for very general points $\mathcal{P} = \{p_{1}, ..., p_{r}\}$ are homogeneous or almost-homogeneous. Let us recall that the sequence of multiplicities $(m_{1}, ..., m_{r})$ is almost-homogeneous if all but at most one of the coordinates are equal, and we say that a curve $C$ is almost-homogeneous at $\mathcal{P}$ if the $r$-tuple $(m_{1}(C), ..., m_{r}(C))$ is almost-homogeneous. Our aim is to present an example showing that for special point configurations irreducible and reduced curves computing Seshadri constants are in general \emph{not} almost-homogeneous.

It is well-known that there exists an irreducible and reduced plane sextic $C_{6} \in \mathbb{P}^{2}$ having exactly one triple point $p_{1}$ and $7$ double points $p_{2}, ...,p_{8}$. We define $\mathcal{P} = \{p_{1}, p_{2}, ..., p_{8},p_{9}, ..., p_{27}\}$, where $p_{9}, ..., p_{27} \in C_{6}$ are arbitrary smooth and mutually distinct points. 
It is easy to compute that
$$\varepsilon(\mathbb{P}^{2}, \mathcal{O}_{\mathbb{P}^{2}}(1); \mathcal{P}) = \frac{1}{6},$$
and the Seshadri constant is computed by $C_{6}$.
\end{remark}
\begin{remark}
As we observed in Introduction, for any configuration of points $x_{1}, ..., x_{r} \in \mathbb{P}^{2}$ one has
$$ \frac{1}{r} \leq \varepsilon (\mathbb{P}^{2}, \mathcal{O}_{\mathbb{P}^{2}}(1); x_{1}, ..., x_{r}) \leq \frac{1}{\sqrt{r}}.$$
Our experiment shows an interesting phenomenon that for line arrangements with Hirzebruch's property the values of Seshadri constants of $\mathcal{O}_{\mathbb{P}^{2}}(1)$ centered at singular loci of the arrangements are close to $\frac{1}{\sqrt{r}}$. Let us point out here that for Klein's arrangement of lines the Seshadri constant of $\mathcal{O}_{\mathbb{P}^{2}}(1)$ is equal to $\frac{1}{8}$, but for very general $49$ points we know that the Seshadri constant of $\mathcal{O}_{\mathbb{P}^{2}}(1)$ is equal $\frac{1}{7}$. In the case of $n$-th CEVA's arrangement, we have exactly $r = n^{2}+3$ singular points and the Seshadri ratio is equal to $\frac{1}{n+1}$, which is really close to the predicted value $\frac{1}{\sqrt{n^{2}+3}}$ for very general points. 
\end{remark}
\section*{Acknowledgement} 
The author would like to thank Joaquim Ro\'{e}, Mike Roth, and Tomasz Szemberg for discussions around Seshadri constants. I also would like to thank Szymon Brzostowski for explaining to me certain aspects of \cite{Huh}, and  Halszka Tutaj-Gasi\'nska for useful discussions, and for suggesting a choice of lines in the case d) on page $8$ which helped to compute the Seshadri constant. Finally, I would like to warmly thank an anonymous referee for valuable comments and remarks that allowed to improve this note.


    Institute of Mathematics,
    Polish Academy of Sciences,
    ul. \'{S}niadeckich 8,
    PL-00-656 Warszawa, Poland. \\

\nopagebreak
\textit{E-mail address:} \texttt{piotrpkr@gmail.com, ppokora@impan.pl}

\end{document}